\newcommand{\unit}{1\!\!1} % identity matrix
\theoremstyle{plain}
\newtheorem{theorem}{Theorem}[subsection]
\newtheorem{proposition}{Proposition}[subsection]
\newtheorem{lemma}{Lemma}[subsection]
\theoremstyle{definition}
\newtheorem{example}{Example}[subsection]
\theoremstyle{definition}
\newtheorem{remark}{Remark}[subsection]
\newtheorem{note}{Note}[subsection]
\def\@tocline#1#2#3#4#5#6#7{\relax
  \ifnum #1>\c@tocdepth % then omit
  \else
    \par \addpenalty\@secpenalty\addvspace{#2}%
    \begingroup \hyphenpenalty\@M
    \@ifempty{#4}{%
      \@tempdima\csname r@tocindent\number#1\endcsname\relax
    }{%
      \@tempdima#4\relax
    }%
    \parindent\z@ \leftskip#3\relax \advance\leftskip\@tempdima\relax
    \rightskip\@pnumwidth plus4em \parfillskip-\@pnumwidth
    #5\leavevmode\hskip-\@tempdima
      \ifcase #1
       \or\or \hskip 1em \or \hskip 2em \else \hskip 3em \fi%
      #6\nobreak\relax
    \dotfill\hbox to\@pnumwidth{\@tocpagenum{#7}}\par
    \nobreak
    \endgroup
  \fi}
\begin{document}

\title{Generalized Seiberg-Witten equations on Riemann surface}   
 
\author[R. Dey]{Rukmini Dey}

\address{International Centre for Theoretical Sciences, Bangalore, India}

\email{rukmini@icts.res.in}

\author[V. Thakre]{Varun Thakre}

\address{Department of Mathematics, Harish Chandra Research Institute, 
Allahabad, India}

\email{varunthakre@hri.res.in}

\date{\today} 

\keywords{Seiberg-Witten, dimensional reduction, Higgs field, geometric quantization, hyperK\"ahler manifolds}

\maketitle

%\let\oldthefootnote\thefootnote
%\renewcommand{\thefootnote}{\fnsymbol{footnote}}
%\footnotetext{varunthakre@hri.res.in}
%\let\thefootnote\oldthefootnote

%%%%%%%%%%%%%%%%%%%%%%%% Abstract %%%%%%%%%%%%%%%%%%%%%%%%

\begin{abstract}

In this paper we consider twice-dimensionally reduced, generalized Seiberg-Witten equations, defined on a compact Riemann surface. A novel feature of the reduction technique is that the resulting equations produce an extra ``Higgs field''. Under suitable regularity assumptions, we show that the moduli space of gauge-equivalent classes of solutions to the reduced equations, is a smooth K\"ahler manifold and construct a pre-quantum line bundle over the moduli space of solutions.

\end{abstract}

%%%%%%%%%%%%%%%%%%%%% Introduction %%%%%%%%%%%%%%%%%%%%%%%
\tableofcontents
\section{Introduction}
\label{intro}

Dimensional reduction of gauge-theories have been instrumental in the understanding of Topological QFTs (TQFT). As motivating examples, one can consider the vortex equations \cite{hitchin}, which are the dimensional reduction of 4-dimensional Yang-Mills equations, the dimensional reduction and quantization of 3-dimensional Chern-Simons gauge theory. 

Seiberg-Witten gauge theory has been of interest mathematicians, for as a TQFT, it provides new topological invariants which may provide new directions leading towards the classification of smooth, four-dimensional manifolds. Dimensional reduction of Seiberg-Witten equations to two-dimensions has been studied by Martin \& Restuccia \cite{restuccia}, Saclioglua \& Nergiza \cite{nergiz} and Dey \cite{rukmini}. Except for \cite{rukmini}, the reduction does not involve any Higgs field. %It is hoped that the dimensional reduction of Seiberg-Witten equations would provide new Abelian topological theory in two dimensions, dual to twice dimensionally reduced Donaldson theory.

In this paper, we construct a dimensional reduction of \emph{generalized Seiberg-Witten equations}. For dimension three, the generalized Seiberg-Witten equations were introduced by Taubes \cite{taubes} and were extended to dimension four by Pidstrygach \cite{victor}. The central element of this generalization involves construction of a non-linear Dirac operator by replacing the spinor representation $\mathbb{H}$ with a hyperK\"ahler manifold admitting certain symmetries. The reduction technique we use is similar to the one in \cite{hitchin}, \cite{rukmini}. Namely, we first consider the generalized Seiberg-Witten equations on $\mathbb{R}^{4}$ and then project the equations on the complex plane.  The resulting equations are conformally invariant and therefore can be defined on a compact Riemann surface of any genus. 

Under suitable regularity conditions, the moduli space of solutions to the reduced equations is shown to be a smooth K\"ahler manifold. If the K\"ahler 2-form is integral, we show that the Quillen determinant line-bundle on the configuration space, descends as the pre-quantum line bundle over the moduli space of solutions. Regarding the moduli space as the phase space, we define its Hilbert space quantization as the space of holomorpic sections of the Quillen determinant line-bundle as in \cite{D1}. 

Our paper is organized as follows: we first review the requisite preliminaries on the hyperK\"ahler manifolds in section (\ref{preliminaries}) and then proceed to a quick introduction to the non-linear Dirac operator in four dimensions in subsection (\ref{gensw}). Using this, we define the generalized Seiberg-Witten equations. Although the generalization makes sense for any four-dimensional manifold, for the sake of simplicity and with the further exposition in mind, we stick to the simplest case where the base manifold is $\mathbb{R}^{4}$. In the section (\ref{dimensional reduction}), we describe a dimensional reduction technique and define the reduced equations on $\mathbb{R}^{2}$. Using the conformal invariance of the equations, we define them on an arbitrary compact, oriented, Riemann surface.In section (\ref{moduli space}) we show that the moduli space of gauge-equivalent solutions is a smooth, K\"ahler manifold. In the final section (\ref{quantization}), we describe the Quillen determinant line bundle construction on the moduli space.

%%%%%%%%%%%%%%%%%%%% Preliminaries %%%%%%%%%%%%%%%%%%%%%%%

\section{Definitions and notations}
\label{preliminaries}

A \emph{hyperK\"ahler manifold} $(M, g^{\scriptscriptstyle M}, I_{1},I_{2},I_{3})$ is a $4n$-dimensional Riemannian manifold, endowed with three complex structures satisfying quaternionic relations $I_{1}^{2} = I_{2}^{2} = I_{3}^{2} = I_{1}I_{2}I_{3} =-1$, such that the metric $g^{\scriptscriptstyle M}$ is K\"ahler with respect to each $I_{j}$, $j=1,2,3$.

Infact, for any $\xi_{1},\xi_{2},\xi_{3} \in \mathbb{R}$ such that $\xi_{1}^2 + \xi_{2}^2 + \xi_{3}^2 = 1$, $I_{\xi} := \xi_{1}I_{1} + \xi_{2}I_{2} + \xi_{3}I_{3} \in End(TM)$ is again a K\"ahler structure on $M$. In other words, $M$ carries a family of K\"ahler structures, parametrized by 2-sphere $S^2$. In particular, a hyperK\"ahler manifold is a symplectic manifold in many different ways.

Suppose that a Lie group $G$ acts smoothly on $M$, preserving the hyperK\"ahler structure. Namely, the action is isometric and fixes the 2-sphere of complex structures. Then $G$ preserves the K\"ahler forms $\omega_{1}, \omega_{2}, \omega_{3}$, associated to $I_{1}, I_{2}, I_{3}$ respectively. Additionally, if the three associated symplectic moment maps exist, then they can be combined into a single \emph{hyperK\"ahler moment map} $\mu: M \longrightarrow \mathbb{R}^{3} \otimes \mathfrak{g}^{*}$, where $\mathfrak{g}$ denotes the Lie algebra of $G$. Such an action of $G$ on $M$ for which the hyperK\"ahler moment map exists is said to be \emph{tri-Hamiltonian}.

\begin{example}
Let $M = \mathbb{H}$. Then $T\mathbb{H} = \mathbb{H} \times \mathbb{H}$. For $(h,v) \in T\mathbb{H}$, define the complex structures \[I_{1}(h,v) = (h, -v\mathrm{i}),~~ I_{2}(h,v) = (h, -v\mathrm{j}),~~ I_{3}(h,v) = (h, -v\mathrm{k})\]
We have $\omega = \frac{1}{2}~d\bar{h}\wedge dh$. Consider the $U(1)$-action on $\mathbb{H}$ given by $U(1)\times \mathbb{H} \ni (z,h) \mapsto zh \in \mathbb{H}.$ The action preserves the three K\"ahler structures and is tri-Hamiltonian, with the hyperK\"ahler moment map $\mu: \mathbb{H} \longrightarrow \mathfrak{sp}(1) \cong \mathfrak{sp}(1)^{*}$ given by \[\mu(h) = \frac{1}{2}~\bar{h}\mathrm{i}h.\]
\end{example}

%%%%%%%%%%%%%%%%%%%%Generalized Seiberg-Witten%%%%%%%%%%%%

\subsection{Generalized Seiberg-Witten on $\mathbb{R}^{4}$} 
\label{gensw}

%%%%%%%%%%%%%%%%%%%%%%%%%%%%%%%%%%%%%%%%%%%%%%%%%%%%%%%%%%%%%%%%%%%%%%%

Consider the flat Euclidean space $\mathbb{R}^{4} = \mathbb{H}$, with co-ordinates $(x_{0}, x_{1}, x_{2}, x_{3})$. Fix the constant $Spin$-structure $c: \mathbb{H} = T_{x}\mathbb{H} \longrightarrow \mathbb{H}\times \mathbb{H}$, given by:
\begin{equation*}
c(\xi) = \begin{pmatrix}   
                     *       & \gamma(\xi) \\                    
                -\gamma(\xi) &    0 
         \end{pmatrix},
\gamma(\xi) = \begin{pmatrix}   
              \xi_{0} + i\xi_{1} & \xi_{2} + i\xi_{3} \\                    
             -\xi_{2} + i\xi_{3} & \xi_{0} - i\xi_{1}                      
             \end{pmatrix}.
\end{equation*}
Thus, $\gamma(e_{0}) = \unit, ~~ \gamma(e_{j}) = I_{j}$ for $j=1,2,3$. The covariant derivative of a spinor $u:\mathbb{R}^{4} \longrightarrow \mathbb{H}$ is given by:
\[Du(e_{i}) = \displaystyle \frac{\partial u}{\partial x_{i}}.\]

Composing this with Clifford multiplication $c$, we obtain the Dirac operator $\mathcal{D}: C^{\infty}(\mathbb{R}^{4},\mathbb{H}) \longrightarrow C^{\infty}(\mathbb{R}^{4},\mathbb{H})$ on the space of positive spinors
\[\mathcal{D}^{+} = -\frac{\partial }{\partial x_{0}} + i \frac{\partial }{\partial x_{1}} + j \frac{\partial }{\partial x_{2}} + k \frac{\partial }{\partial x_{3}}\] 
We say that a a smooth map $u: \mathbb{R}^{4} \longrightarrow \mathbb{H}$ is \emph{harmonic} if $\mathcal{D}^{+}u = 0$. Clearly, the Dirac operator (and hence also the harmonicity condition) can be easily generalized to the case where $\mathbb{H}$ is replaced by an arbitrary hyperK\"ahler manifold $(M, g^{\scriptscriptstyle M}, I_{1}, I_{2}, I_{3})$.
More precisely, for a hyperK\"ahler manifold $(M, g^{\scriptscriptstyle M}, I_{1}, I_{2}, I_{3})$ and a smooth map $u: \mathbb{R} \longrightarrow M$, 
\[\mathcal{D}u = -\frac{\partial u}{\partial x_{0}} + I_{1} \frac{\partial u}{\partial x_{1}} + I_{2} \frac{\partial u}{\partial x_{2}} + I_{3} \frac{\partial u}{\partial x_{3}}.\]

The second ingredient we need in order to define the generalized Seiberg-Witten equations is a hyperK\"ahler moment map. Assume that $M$ admits a tri-Hamiltonian action of a compact Lie group $G$. Consider $\mathbb{R}^{4}$ with basic co-ordinates $(x_{1}, x_{2}, x_{3}, x_{4})$ and let $P$ denote the trivial principal $G$-bundle over $\mathbb{R}^{4}$. A connection on $P$ is described by a Lie-algebra-valued one-form \[\mathsf{a} = a_{0}~dx_{0} + a_{1}~dx_{1} + a_{2}~dx_{2} + a_{3}~dx_{3}~\in~\Omega^{1}(\mathbb{R}^{4}, \mathfrak{g}),\]
where $a_{i}: \mathbb{R}^{4} \longrightarrow \mathfrak{g}$ are smooth maps. The curvature of $\mathsf{a}$ is a $\mathfrak{g}$-valued 2-form \[ F(\mathsf{a}) = \sum_{i<j} F^{ij}_{\mathsf{a}}~dx_{i}\wedge dx_{j} \in \Omega^{2}(\mathbb{R}^{4}, \mathfrak{g}) \] where, 
\[F^{ij}_{\mathsf{a}} = \Big(\frac{\partial a_{j}}{\partial x_{i}} - \frac{\partial a_{i}}{\partial x_{j}}\Big) + [a_{i}, a_{j}].\]

For a smooth map $u: \mathbb{R}^{4} \longrightarrow M$ and a connection $\mathsf{a}$ on $P$, we define the \emph{twisted} Dirac operator by
\[\mathcal{D}_{\mathsf{a}}u = \left(-\frac{\partial u}{\partial x_{0}} + L^{M}_{u}a_{0} \right)+\sum^{3}_{i=1}I_{i}\left(\frac{\partial u}{\partial x_{i}} + L^{M}_{u}a_{i}\right), \]
where $L^{M}_{u}a_{i}$ denotes the fundamental vector field generated by the infinitesimal action of $G$ on $M$, at a point $u(\cdot)$ given by \[(L^{M}_{u}a_{0})(p) = \left. \frac{d}{dt} ~ \text{exp}(t ~a_{0}(p))\cdot u(p) \right |_{t=0}, ~~~ p \in \mathbb{R}^{4}.\]

The generalized Seiberg-Witten equations for a pair $(u, \mathsf{a})\in C^{\infty}(\mathbb{R}^{4}, M) \times \Omega^{1}(\mathbb{R}^{4}, \mathfrak{g})$ are given by:
\begin{equation}\label{sw in R4}
\left\{  
  \begin{array}{rcl}
     
     F^{+}_{\mathsf{a}} - \mu \circ u & = & 0 \\    
     \mathcal{D}_{\mathsf{a}}u & = & 0

  \end{array}
  \right.
\end{equation}
where, $F^{+}_{\mathsf{a}} \in \Omega^{2}(\mathbb{R}^{4}, \Lambda^{2}_{+}(\mathbb{R}^{4})^{*} \otimes \mathfrak{g})$ is the self-dual part of the curvature $F_{\mathsf{a}}$. In the first equation we use the identification  
$\Lambda^{2}_{+}(\mathbb{R}^{4})^{*} \cong \mathbb{R}^{3}$ and $\mathfrak{g} \cong \mathfrak{g}^{*}$ using an $ad$-invariant metric on $\mathfrak{g}$.

Equivalently, we can write the equations as:
\begin{equation}
\label{sw in R4 disintegrated}
\left\{  
  \begin{array}{rcl}
     F^{01}_{\mathsf{a}} + F^{23}_{\mathsf{a}} & = & \mu_{1}\circ u\\
     F^{02}_{\mathsf{a}} + F^{31}_{\mathsf{a}} & = & \mu_{2}\circ u\\
     F^{03}_{\mathsf{a}} + F^{12}_{\mathsf{a}} & = & \mu_{3}\circ u\\
     \displaystyle  \left(\frac{\partial u}{\partial x_{0}} + L^{M}_{u}a_{0} \right)  & = & \displaystyle {\sum^{3}_{i=1}I_{i}}\left(\frac{\partial u}{\partial x_{i}} + L^{M}_{u}a_{i} \right)\\
  \end{array}
  \right.
\end{equation}

where $\{ \mu_{1},\mu_{2},\mu_{3} \}$ are the moment maps associated to the K\"ahler 2-forms $\omega_{1}, \omega_{2}, \omega_{3}$ respectively.

%%%%%%%%%%%%%%%%%%%% Dimensional Reduction %%%%%%%%%%%%%%%%%%%

\section{Dimensional Reduction}
\label{dimensional reduction}

The approach highlighted in \cite{rukmini} for a dimensional reduction of Seiberg-Witten equations can be generalized for the above set up as well. We describe this procedure below.

Henceforth, we set $G=U(1)$. Identify the Lie algebra $\mathrm{i}\mathbb{R} \cong \mathbb{R}$. Assume that the Lie-algebra-valued functions $\{a_{i}\}^{3}_{i=0}$ are independent of $(x_{2}, x_{3})$. Then $a_{0}, a_{1}$ define a connection $\mathsf{a} := a_{0}dx_{0} + a_{1}dx_{1}$ over $\mathbb{R}^{2}$. The maps $a_{2}$ and $a_{3}$, which we re-label as $\phi_{1}$ and $\phi_{2}$, define an auxillary field $\phi = -\left(\phi_{1} + \textrm{i}\phi_{2} \right)$ (also known as \emph{Higgs fields}) on $\mathbb{R}^{2}$. 
The first equation now reads:
\begin{equation}
\begin{aligned}\label{eq2swonR4}
     \ast F_{\mathsf{a}} & = & \mu_{1}\circ u\\
     \left(\frac{\partial \phi_{1}}{\partial x_{1}} - \frac{\partial \phi_{2}}{\partial x_{2}}\right) & = & \mu_{2}\circ u\\
     \left(\frac{\partial \phi_{1}}{\partial x_{2}} + \frac{\partial \phi_{2}}{\partial x_{1}}\right) & = & \mu_{3}\circ u
\end{aligned}
\end{equation}

From a more co-ordinate independent point of view, we have a connection $\mathsf{a}$ on a principal $U(1)$-bundle $P$ over $\mathbb{R}^{2}$ together with an auxiliary field \[\phi \in \Omega^{0}\left(\mathbb{R}^{2}, \mathbb{C}\right).\]

Set $z=x_{0}+\mathrm{i}x_{1}$ and define the a one-form
\begin{align*}
\Phi 
&= \phi dz - \bar{\phi}d\bar{z} \\
&= \Phi^{1,0} - \overline{\Phi^{1,0}}\in \Omega^{1}\left(\mathbb{R}^{2}, \mathbb{C}\right).
\end{align*}
The second and the third equations in \eqref{eq2swonR4} can be combined into a single equation
\begin{equation}
\label{equation for higgs field on R2}
-\ast\overline{\partial} \Phi^{1,0} = \left(\mu_{2}\circ u + \mathrm{i}\mu_{3}\circ u\right) =: \mu_{c} \circ u.
\end{equation}
The equations \eqref{eq2swonR4} now read:
\begin{equation}
\label{eq2swonR4modified}
\left\{
\begin{array}{rcl}
  \ast F_{\mathsf{a}} -\mu_{1}\circ u & = & 0\\ 
   \ast\overline{\partial}\Phi^{1,0} + \mu_{c}\circ u & = & 0
\end{array}
\right.
\end{equation}
The fourth equation in \eqref{sw in R4 disintegrated} can be re-written as
\begin{equation}
\label{simplified equation dirac I}
\left(\frac{\partial u}{\partial x_{0}} + L^{M}_{u}a_{0}\right) - I_{1} \left(\frac{\partial u}{\partial x_{1}} + L^{M}_{u}a_{1} \right) = \left( I_{2}L^{M}_{u}\phi_{1} + I_{3}L^{M}_{u}\phi_{2} \right)
\end{equation}

Let $I_{\mathbb{R}^{2}}$ denote the standard complex structure on $\mathbb{R}^{2}$, given by $\frac{\partial}{\partial x_{1}} = I_{\mathbb{R}^{2}} \left( \frac{\partial}{\partial x_{0}} \right)$ and $\frac{\partial}{\partial x_{0}} = - I_{\mathbb{R}^{2}} \left(\frac{\partial}{\partial x_{1}}\right)$. Observe that
\begin{equation*}
a_{0} = \mathsf{a}\left(\frac{\partial}{\partial x_{0}}\right), ~~ a_{1} = \mathsf{a}\left(\frac{\partial}{\partial x_{1}}\right) = \mathsf{a}\left(I_{\mathbb{R}^{2}} \left(\frac{\partial}{\partial x_{0}}\right)\right)
\end{equation*}
Then the left hand side of \eqref{simplified equation dirac I} can be written as 
\begin{align*}
\left(\frac{\partial u}{\partial x_{0}} + L^{M}_{u}a_{0} \right) - I_{1} \left(\frac{\partial u}{\partial x_{1}} + L^{M}_{u}a_{1} \right) 
&= \left(du \left(\frac{\partial}{\partial x_{0}}\right) + L^{M}_{u}\left(\mathsf{a}\left(\frac{\partial}{\partial x_{0}}\right) \right) \right) \\
&- I_{1} \left(du \left(\frac{\partial}{\partial x_{1}}\right) + L^{M}_{u}\left( \mathsf{a}\left(\frac{\partial}{\partial x_{1}}\right) \right)\right)\\
&= D_{\mathsf{a}} u\left( \frac{\partial}{\partial x_{0}} \right) - I_{1} D_{\mathsf{a}} u\left(I_{\mathbb{R}^{2}} \left(\frac{\partial u}{\partial x_{0}} \right) \right)\\
&= \left(D_{\mathsf{a}}u - I_{1}D_{\mathsf{a}}u \circ I_{\mathbb{R}^{2}} \right) \left( \frac{\partial}{\partial x_{0}} \right) \\
&:= \partial_{\mathsf{a}}u \left( \frac{\partial}{\partial x_{0}} \right)
\end{align*}
On the other hand, observe now that 
\begin{equation*}
\phi_{1} = \Phi\left(\frac{\partial}{\partial x_{0}}\right)~~\text{and}~~ \phi_{2} = \Phi\left(-\frac{\partial}{\partial x_{1}}\right)
\end{equation*}
The right-hand side can be expressed as
\begin{align*}
\left( I_{2} L^{M}_{u} \phi_{1} + I_{3}L^{M}_{u}\phi_{2}\right)
&= \left( I_{2}L^{M}_{u}\left( \Phi\left(\frac{\partial}{\partial x_{0}}\right) \right) + I_{3}L^{M}_{u}\left( \Phi\left(-\frac{\partial}{\partial x_{1}}\right) \right)\right)\\
&= \left( I_{2}L^{M}_{u}\left( \Phi\left(\frac{\partial}{\partial x_{0}}\right) \right) - I_{3}L^{M}_{u}\left( \Phi\left(I_{\mathbb{R}^{2}} \left(\frac{\partial}{\partial x_{0}}\right)\right)\right) \right)\\
&= \left( I_{2}L^{M}_{u}\left(\Phi\left(\frac{\partial}{\partial x_{0}}\right) \right) - I_{1}\left(I_{2}L^{M}_{u}\left( \Phi\left(I_{\mathbb{R}^{2}} \left(\frac{\partial}{\partial x_{0}} \right)\right) \right) \right) \right)\\
&= \left( I_{2}L^{M}_{u}\Phi - I_{1}\left(I_{2}L^{M}_{u}\Phi\circ I_{\mathbb{R}^{2}} \right) \right)\left(\frac{\partial}{\partial x_{0}}\right)\\
&= \left(X_{\Phi}(u)\right)^{1,0}\left(\frac{\partial}{\partial x_{0}}\right) 
\end{align*}

Combining this together with \eqref{eq2swonR4modified}, we get the reduced equations on $\mathbb{R}^{2}$
\begin{equation}\label{dimensionalreductioninR4}
  \left\{
    \begin{array}{lcl}
      \ast F_{\mathsf{a}} -\mu_{1}\circ u  =  0 \\
      \partial_{\mathsf{a}}u - \left(X_{\Phi}(u)\right)^{1,0}= 0 \\      
      \ast\overline{\partial}\Phi^{1,0} + \mu_{c}\circ u  =  0
    \end{array}
  \right.
\end{equation}

The equations are conformally invariant and hence can be defined on manifolds modelled locally on $\mathbb{R}^{2}$, namely, Riemann surfaces.

\subsection{Generalized Seiberg-Witten on Riemann surface}
\label{gen sw on riemann surface}
Let $(\Sigma, g_{\scriptscriptstyle \Sigma}, J_{\scriptstyle \Sigma})$ be a compact, oriented Riemann surface of genus $g$, with a conformal metric $ds^{2} = h^{2}dz\otimes d\bar{z}$. Let $\pi_{P}: P \rightarrow \Sigma$ be a principal $U(1)$-bundle over $\Sigma$. Let $(M, g^{\scriptscriptstyle M}, I_{1}, I_{2}, I_{3})$ be a hyperK\"ahler manifold endowed with a tri-Hamiltonian action of $U(1)$. We denote by $\mathscr{S}^{\infty}:=C^{\infty}(P,M)^{U(1)}$ the space of smooth $U(1)$-equivariant maps $u:P \rightarrow M$. Denote by $\mathcal{A}(P)$ the space of connections on $P$.

For $u\in \mathscr{S}^{\infty}$, we define the covariant derivative of $u$ with respect to $\mathsf{a}$ by \[D_{\mathsf{a}}u = du + L^{M}_{u}\mathsf{a}\in \Omega^{1}(P, u^{*}TM)^{U(1)}_{hor},\]  where the subscript ``\emph{hor}" denotes that the 1-form is horizontal. This therefore descends to a one form on $\Sigma$ with values in $u^{\ast}TM/U(1)$. The complex structure $I_{1}$ determines a $U(1)$-invariant complex structure on $u^{\ast} TM \longrightarrow P$ and hence also on $u^{\ast} TM/U(1) \longrightarrow \Sigma$. We denote by $\partial_{\mathsf{a}}u$ the $(1,0)$-part of the 1-form $D_{\mathsf{a}}u$, with respect to $I_{1}$. Namely,
\[\partial_{\mathsf{a}}u = \frac{1}{2}\left( D_{\mathsf{a}}u - I_{1} \circ D_{\mathsf{a}}u \circ J_{\scriptstyle \Sigma} \right). \]

Define the configuration space \[\mathcal{C}^{\infty} =  \mathcal{A}(P)\times \mathscr{S}^{\infty} \times \Omega^{1}(\Sigma, \mathbb{R}).\] The space $\mathcal{C}^{\infty}$ is an infinite-dimensional Frech\'et manifold with an action of the gauge group $\mathcal{G}^{\infty} = C^{\infty}(P, U(1))$ given by \[g \cdot (\mathsf{a},u,\Phi) \longmapsto \left(~ \mathsf{a}+g^{-1}dg,~g\cdot u~, \Phi\right).\] 
Note that the gauge group does not act on the Higgs field!

For $(\mathsf{a},u,\Phi)\in \mathcal{C}$, we define the dimensional reduction of generalized Seiberg-Witten equations on $\Sigma$ by:
\begin{equation} \label{seiberg-witen on riemann surface}
  \left\{
    \begin{array}{lcl}
      *F_{\mathsf{a}} - \mu_{1}\circ u = 0 \\
      \partial_{\mathsf{a}}u - \left(X_{\Phi}(u)\right)^{1,0}= 0 \\      
      *\overline{\partial}\Phi^{1,0} + \mu_{c} \circ u  =  0
    \end{array}
  \right.
\end{equation}
The first and third equation requires some explanation. For the first equation, we consider $F_{\mathsf{a}} \in \Omega^{2}(P, \mathbb{R})_{hor}$. For the third equation, observe that $\mu_{c} \circ u: P \longrightarrow \mathbb{C}$ is $U(1)$-invariant and therefore descends to a complex-valued map on $\Sigma$, which we again denote by $\mu_{c} \circ u$. 

The equations \eqref{seiberg-witen on riemann surface} are invariant under the action of $\mathcal{G}^{\infty}$.

%%%%%%%%%%%%%%%%%%%% Moduli Space %%%%%%%%%%%%%%%%%%%%%%%%

\section{Moduli space}
\label{moduli space}

In this section, we shall construct the moduli space of gauge equivalent solutions to \eqref{seiberg-witen on riemann surface}. We begin by fixing the Sobolev completion of the configuration space $\mathcal{C}$ so as to use the implicit function formulation on Banach manifolds. In order to ensure smoothnesss of the moduli space, we need to consider suitable perturbation of the equations \eqref{seiberg-witen on riemann surface}.

Throughout the rest of the section we shall assume that the tri-Hamiltonian $U(1)$-action on $M$ is semi-free; i.e, outside the set of fixed points $M^{U(1)}$, the action is free.

\subsection{Sobolev completions}
Let $\pi_{E}: E \longrightarrow \Sigma$ be the fibred product $P\times_{\scriptscriptstyle U(1)}M$. Consider an embedding $\iota: E \hookrightarrow \mathbb{R}^{N}$. We define the $W^{1,p}$-norm of $u \in \Gamma(\Sigma, E)$ to be the sum of $W^{1,p}$-norm of the components of $\iota \circ u$. Define $\mathscr{S}^{1,p}$ to be the completion of $\mathscr{S}^{\infty}\cong \Gamma(\Sigma, E)$ in the $W^{1,p}$-norm. Then $\mathscr{S}^{1,p}$ is a Banach manifold. For $p>2$, we have a compact embedding $W^{1,p} \hookrightarrow C^{0}$. This implies that for $p>2$, every $u \in \mathscr{S}^{1,p}$ is continuous. Note that the completion is independent of the embedding. 

Fix a smooth fiducial connection $A_{0}$ on $P$ and define 
\[\mathcal{A}^{1,p}(P): = A_{0} + \Omega^{1}(\Sigma, \mathbb{R})_{W^{1,p}}.\]

For $p >2$, the Sobolev multiplication theorem $W^{1,p}\otimes W^{1,p} \longrightarrow L^{p}$ implies $\partial_{\mathsf{a}}u \in \Omega^{1,0}\left(\Sigma, E_{u}\right)_{L^{p}}$, where $E_{u}:= u^{*}TM/U(1)$. Also, since $p>2$, the Sobolev composition law holds. This implies, $\mu_{1}\circ u \in W^{1,p}(\Sigma, \mathbb{R})$ and $\mu_{c}\circ u \in W^{1,p}(\Sigma, \mathbb{C})$.

Finally, we consider the completion $\mathcal{G}^{2,p}$ of the gauge group $\mathcal{G}$ in the $W^{2,p}$-norm. Then $\mathcal{G}^{2,p}$ is a Banach Lie group acting smoothly on $\mathcal{A}^{1,p}$, with its Lie algebra being given by $\text{Lie}(\mathcal{G}^{2,p}) = W^{2,p}(\Sigma, \mathbb{R})$.
 
\subsection{Abstract setup}
For $p>2$, consider the infinite dimensional Banach manifold given by 
\[\mathcal{C}^{1,p} =  \mathcal{A}^{1,p}(P) \times W^{1,p}(\Sigma, E) \times \Omega^{1}(\Sigma,\mathbb{R})_{ W^{1,p}}. \]
The tangent to $\mathcal{C}^{1,p}$ at a point $q:= \left(\mathsf{a},u,\Phi\right) \in \mathcal{C}^{1,p}$ is given by:
\[T_{q}\mathcal{C}^{1,p} = \Omega^{1}(\Sigma, \mathbb{R})_{W^{1,p}} \times W^{1,p}(\Sigma, E_{u}) \times \Omega^{1}(\Sigma,\mathbb{R})_{W^{1,p}}.\]
Consider the infinite-dimensional vector bundle $\mathcal{E}^{p} \longrightarrow \mathcal{C}^{1,p}$, with fibre at a point $q \in \mathcal{C}^{1,p}$ being given by
\[ \mathcal{E}^{p}_{q} = \Omega^{0}(\Sigma, \mathbb{R})_{L^{p}} \times \Omega^{1,0}(\Sigma, E_{u})_{L^{p}} \times\Omega^{0}(\Sigma, \mathbb{C})_{ L^{p}}.\]
Observe that the action of the gauge group $\mathcal{G}^{2,p}:= W^{2,p}(P, U(1))$ on $\mathcal{C}^{1,p}$ lifts to an action on $\mathcal{E}^{p}$. Define the equivariant section 
\begin{equation}
    \begin{aligned}
           &  \mathcal{F}:\mathcal{C}^{1,p} \longrightarrow \mathcal{E}^{p} \\
          \mathcal{F}(\mathsf{a}, u, \Phi) &= \left(*F_{\mathsf{a}} - \mu_{1}\circ u, ~ \partial_{\mathsf{a}}u - \left(X_{\Phi}(u)\right)^{1,0},~ \ast\overline{\partial} \Phi^{1,0} + \mu_{c} \circ u \right)
    \end{aligned}
\end{equation}

Then the solutions to \eqref{seiberg-witen on riemann surface} are the zeroes of $\mathcal{F}$.

\subsection{Linearized Operator}
\label{linearized operator}

The linearization of the equations \eqref{seiberg-witen on riemann surface} at a zero $q=(\mathsf{a}, u, \Phi) \in \mathcal{C}^{1,p}$ of $\mathcal{F}$ gives the operator:

\begin{center}
$D_{q}:T_{q}\mathcal{C}^{1,p} \longrightarrow \mathcal{E}^{p}_{q}$ \\[0.2cm]
$D_{q}\left(\begin{array}{c}\alpha \\ \xi \\  \eta^{1,0} \end{array} \right) \longmapsto 
\left( \begin{array}{c} *d\alpha - d\mu_{1}(\xi) \\ D_{\mathsf{a},u, \Phi}\xi + (L_{u}\alpha)^{1,0} - (X_{\eta})^{1,0} \\ \ast\overline{\partial} \eta^{1,0} + d\mu_{c}(\xi)
\end{array}\right)$
\end{center}
Here $D_{\mathsf{a},\Phi}\xi = (\nabla^{\mathsf{a}}\xi)^{1,0} + \big(\nabla_{\xi}X_{\Phi}\big)^{1,0}$, where $\nabla$ is the Levi-Civita connection  on $(M,g^{\scriptscriptstyle M})$. The induced connection on $u^{*}TM$ is given by
$\nabla^{\mathsf{a}}\xi + \nabla_{\xi}X_{\Phi}(u)$, where $\nabla^{\mathsf{a}}\xi = \nabla \xi + \nabla_{\xi}K^{M}_{\mathsf{a}}$.

The equivariance of the section $\mathcal{F}: \mathcal{C}^{1,p} \longrightarrow \mathcal{E}^{p}$ under the action of the gauge group $\mathcal{G}^{2,p}$ implies that we have the following complex 
\begin{equation}\label{total complex}
0 \rightarrow  W^{2,p}(\Sigma,\mathbb{R})\xrightarrow{d_{1}} T_{q}\mathcal{C}^{1,p}\xrightarrow{d_{2}} \mathcal{E}^{p}_{q} \rightarrow 0
\end{equation}
where $d_{1}(\alpha) = (L_{u}\alpha, ~ d\alpha,~  0)\in T_{q}\mathcal{C}^{k,2}$ and $d_{2} \left(\alpha, \xi, \eta^{1,0}\right) = D_{q}\left(\alpha, \xi, \eta^{1,0}\right)$. Note that if we deform the complex by a homotopy, so as to get rid of the zeroeth order terms, the Euler characteristic or the symbols of the operators remain unchanged. In other words, the complex \eqref{total complex} can be written as a sum of three complexes:
\begin{gather}
0 \rightarrow W^{2,p}(\Sigma, \mathbb{R}) \xrightarrow{d} W^{1,p}(\Sigma, \Lambda^{1}\Sigma) \xrightarrow{d_{\mathsf{a}}} L^{p}(\Sigma, \Lambda^{2}\Sigma) \rightarrow 0 \label{complex 1} \\ 
0 \rightarrow W^{1,p}(\Sigma, E_{u}) \xrightarrow{D_{q}} L^{p}(\Sigma, \Lambda^{0,1}\Sigma \otimes E_{u}) \rightarrow 0 \label{complex 2} \\ 
0 \rightarrow W^{1,p}(\Sigma, \Lambda^{1}\Sigma\otimes \mathbb{C}) \xrightarrow{\ast \bar{\partial}} L^{p}(\Sigma, \mathbb{C}) \rightarrow 0 \label{complex 3}
\end{gather}
Clearly, each of the above complexes is elliptic and consequently, \eqref{total complex} is an elliptic complex.

Let $\delta$ denote the equivariant map $\delta: P \longrightarrow EU(1)$ which is a lift of the classifying map $\tilde{\delta}:\Sigma \longrightarrow BU(1)$. Then $(u,\delta): P \longrightarrow M\times EU(1)$ descends to a map \[\bar{u}: \Sigma \longrightarrow M_{U(1)}:= M\times_{U(1)}EU(1) .\] Define $[u] \in H_{2}(M_{G}, \mathbb{Z})$ to be the push-forward of the fundamental class of $[\Sigma]$ under the map $\bar{u}$.

\begin{proposition}
The operator $d_{1}^{*}+d_{2}: T_{q}\mathcal{C}^{1,p} \longrightarrow \Omega^{0}(\Sigma,\mathbb{R})_{L^{p}} \oplus \mathcal{E}^{p}_{q}$ is a Fredholm operator for every solution $(\mathsf{a},u,\Phi) \in \mathcal{C}^{1,p}$ of \eqref{seiberg-witen on riemann surface} and has a real index given by 
\begin{equation}
\text{Index}~(d_{1}^{*}+d_{2}) = (2n-1)\chi(\Sigma) + 2 \left\langle c^{U(1)}_{1}(TM), [u] \right\rangle + 2g
\end{equation}
where $c^{U(1)}_{1}(TM)$ is the equivariant first Chern class of $TM$. 
\end{proposition}
\begin{proof}
The ellipticity of the complex \eqref{total complex} has the consequence that the operator $d_{1}^{*}+d_{2}: T_{q}\mathcal{C}^{1,p} \longrightarrow \Omega^{0}(\Sigma,\mathbb{R})_{L^{p}} \oplus \mathcal{E}^{p}_{q}$ is Fredholm and therefore has a well-defined index. Since the complex \eqref{total complex} decomposes into three complexes, the index of \eqref{total complex} is the sum of indices of the complexes \eqref{complex 1}, \eqref{complex 2}, \eqref{complex 3}.
The index for the operator 
\begin{equation*}
\Omega^{1}(\Sigma, \mathbb{R}) \longrightarrow \Omega^{0}(\Sigma, \mathbb{R})\oplus \Omega^{0}(\Sigma, \mathbb{R}):~~ \alpha \mapsto \left( d^{*}\alpha, \ast d\alpha \right)
\end{equation*}
is given by $-\chi(\Sigma)$. By Riemann-Roch theorem, the index of the \eqref{complex 2} is given by $2 \langle c^{U(1)}_{1}(TM), [u] \rangle + 2n\chi(\Sigma)$.
Finally, the index for the third complex \eqref{complex 3} is $2g$. It is a simple observation now that $d^{*}_{1} + d_{2}$ is a compact perturbation of these operators. Therefore, the index of $d^{*}_{1} + d_{2}$ is given by:
\begin{equation}
\text{Index}~(d_{1}^{*}+d_{2}) = (2n-1)\chi(\Sigma) + 2 \left\langle c^{U(1)}_{1}(TM), [u] \right\rangle + 2g
\end{equation}
The statement follows. 
\end{proof}

\subsection{Transversality}
In order to prove that the moduli space of gauge equivalent solutions is a smooth Banach manifold, we need to establish transversality. Namely, we need to prove that $\mathcal{F}$ is transverse to the zero section. The techniques used in this section are almost verbatim to the ones in \cite{riera}, used to prove transversality for symplectic vortex equations. 

Define the space of perturbations \[\mathcal{P}:=\left\lbrace (\sigma_{1}, \sigma_{2}, \sigma_{3}) \in \mathcal{C}~ \left \vert \right. g \cdot (\sigma_{1}, \sigma_{2}, \sigma_{3}) = (\sigma_{1}, \sigma_{2}, \sigma_{3}), g \in \mathcal{G} \right\rbrace.\] %Observe that 

For $c \in \mathbb{R}$, consider the perturbed equations
\begin{equation}\label{perturbed reduced equations}
  \left\{
    \begin{array}{lcl}
      \ast F_{\mathsf{a}} - \mu_{1}\circ u  =  c + \sigma_{1} \\
      \partial_{\mathsf{a}}u - \left(X_{\Phi}(u)\right)^{1,0}=  \sigma_{2} \\      
      \ast\overline{\partial}\Phi^{1,0} + \mu_{c}\circ u  =  \sigma_{3}
    \end{array}
  \right.
\end{equation}

Fix a cohomology class $B \in H^{2}(M_{U(1)}, \mathbb{Z})$ and for a fixed $\sigma \in \mathcal{P}$, define the solution space 
\begin{equation*}
N_{\sigma}(B,c):= \left\lbrace \left(\mathsf{a}, u, \Phi\right) \in \left. \mathcal{C}^{1,p} ~ \right |  ~ [u] = B ~~ \text{and} ~~ \left(\mathsf{a}, u, \Phi\right) ~~\text{satisfy}~~ \eqref{perturbed reduced equations} \right\rbrace
\end{equation*}
Observe that $N_{\sigma}(B,c)$ is invariant under the action of the gauge group $\mathcal{G}^{2,p}$. Let $M^{U(1)}$ denote the fixed points of the $U(1)$-action on the hyperK\"ahler manifold $M$.
Define
\begin{equation*}
C_{0} = \mu_{1}\left( M^{U(1)} \right) - 2\pi\frac{\text{deg}(P)}{\text{Vol}(\Sigma)} \subset \mathbb{R}
\end{equation*}
%and let $\mathcal{E}_{0} := \Sigma \times M^{U(1)} \subset E$.
Define $\mathscr{S}^{1,p}_{*}:=\left\lbrace u \in \left. \mathscr{S}^{1,p}~ \right | [u] = B,~ u\left(P\right)\nsubseteq M^{U(1)} \right\rbrace$.

\begin{lemma}[\cite{riera}, Lemma 3.4.1, Cor. 3.4.2] \label{freeness of gauge action}
\begin{enumerate}
\item Let $c \in \mathbb{R}\setminus C_{0}$ and define $\mathcal{P}_{c} = \left\lbrace \sigma \in \left. \mathcal{P}~ \right. | \left. ~ \right|\sigma_{1}| < d(c, C_{0}) \right\rbrace$. Then, for $\sigma \in \mathcal{P}_{c}$, if $(\mathsf{a}, u, \Phi)$ satisfy \eqref{perturbed reduced equations}, then $u\left(P\right)\nsubseteq M^{U(1)}$.

\item If $\sigma \in \mathcal{P}_{c}$, the action of $\mathcal{G}^{2,p}$ on $N_{\sigma}(B,c)$ is free.
\end{enumerate}

\end{lemma}

\begin{proof}
\begin{enumerate}
 \item Let $u(P) \subset M^{U(1)}$. This implies $u(\Sigma) \subset \Sigma \times M^{U(1)}$. Since $\Sigma$ is connected, the $\mu_{1}\circ u$ assumes a constant value on $\Sigma$. Integrating the first equation in \eqref{perturbed reduced equations} and using Chern-Weil theory, we obtain that $|\sigma_{1}|_{C^{0}} > d(c, C_{0})$.

\item Observe that if $(\mathsf{a}, u, \Phi)$ have non-trivial stabilizers, then $u(P) \in M^{(1)}$.
\end{enumerate}
The statements follow.
\end{proof}

Define the moduli space of solutions to be the quotient
\begin{equation}
\mathcal{M}_{\sigma}(B,c):= N_{\sigma}(B,c)/\mathcal{G}^{2,p}
\end{equation}

\begin{theorem}[\cite{riera}, Theorem 3.4.4]
Let $c \in \mathbb{R}\setminus C_{0}$. Then for any $\sigma \in \mathcal{P}_{c}$, the moduli space $\mathcal{M}_{\sigma}(B,c)$ is a smooth manifold of real dimension 
\begin{equation*}
(2n-1)\chi(\Sigma) + 2 \left\langle c^{U(1)}_{1}(TM), [u] \right\rangle + 2g
\end{equation*}
\end{theorem}

\begin{proof}

Let $m>0$ be a large positive integer and $\mathcal{P}^{m}_{c}$ denote the completion of $\mathcal{P}_{c}$ with respect to the $C^{m}$-norm. Define
$\mathcal{C}^{1,p}_{\ast} = \mathcal{A}^{1,p} \times \mathscr{S}^{1,p}_{\ast} \times \Omega^{1}(\Sigma, \mathbb{C})W^{1,p}$ and let $\mathcal{B}^{m} :=\mathcal{C}^{1,p}_{\ast} \times \mathcal{P}^{m}_{c}$. Then, we have a Banach vector bundle $\mathcal{W} \longrightarrow \mathcal{B}^{m}$, whose fibre at a point $(\mathsf{a}, u, \Phi, \sigma)$ is $\Omega^{1}(\Sigma, \mathbb{R})_{L^{p}} \oplus \Omega^{1,0}(\Sigma, E_{u})_{L^{p}} \oplus \Omega^{0}(\Sigma, \mathbb{C})_{L^{p}}$ and a section of the bundle $\mathcal{F}: \mathcal{B}^{m} \longrightarrow \mathcal{W}$ given by 
\begin{multline}
\mathcal{F}(\mathsf{a}, u, \Phi, \sigma) = \left(\ast F_{\mathsf{a}} - \mu_{1}\circ u  -  c + \sigma_{1},~ \partial_{\mathsf{a}}u - \left(X_{\Phi}(u)\right)^{1,0} - \sigma_{2},~ \ast\overline{\partial}\Phi^{1,0} + \mu_{c}\circ u  -  \sigma_{3}\right)
\end{multline}
By definition, $\mathcal{F}^{-1}(0) = N_{\sigma}(B,c)$. To prove that $N_{\sigma}(B,c)$ is smooth, we need to prove that $\mathcal{F}$ is transverse to the zero section.

Observe that, modulo the infinitesimal action of the gauge group $\mathcal{G}^{2,p}$, $D\mathcal{F}$ is an elliptic operator and hence its image is closed. We claim that the image is also exhaustive. For if not, then there would exist a non-trivial complement $\mathcal{E}^{l}_{q}$, where $\frac{1}{p} + \frac{1}{l} = 1$ and hence a non-zero element $(\alpha_1, \xi_1, \eta_1) \in \mathcal{E}^{l}_{q}$, such that for any $(\alpha, \xi,\eta,\sigma) \in \mathcal{B}^{m}$, 
\[\int_{\Sigma} \langle *d\alpha - d\mu_{1}(\xi) + \sigma_1, \alpha_1  \rangle = 0,\]
\[\int_{\Sigma} \langle D_{\mathsf{a},u, \Phi}\xi + (L_{u}\alpha)^{1,0} - (X_{\eta})^{1,0} + \sigma_2, \xi_1  \rangle = 0,\] and
\[\int_{\Sigma} \langle \ast\overline{\partial} \eta^{1,0} + d\mu_{c}(\xi) + \sigma_3, \eta_1  \rangle = 0 \]

Let $\Sigma_0 = \{ x \in \Sigma ~ | ~ u(x) \in \Sigma \times M^{U(1)} \}$. Then, for a section $\xi \in Hom(TP, u^{*}TM)_{hor} \cong Hom(T\Sigma, u^{*}TE)$ with support in $\Sigma \setminus \Sigma_0$, \\ $\xi \in Hom(T\Sigma, u^{*}T\mathcal{E}) $. This implies $\xi_1 = 0$ on $\Sigma \setminus \Sigma_0$ and hence on $\Sigma$. For if not, we can choose $\xi = \alpha = \eta = 0$ and $\xi_1$ to be a suitable bump function that makes the integral non-zero. By similar arguments, $\alpha_1 = 0$ and $\eta_1 = 0$. Hence $\mathcal{N}_{\sigma}(B,c)$ is a smooth Banach manifold. By Lemma \eqref{freeness of gauge action}, the action of $\mathcal{G}^{2,p}$ on $\mathcal{N}_{\sigma}(B,c)$ is free. Uhlenbeck's gauge fixing theorem guarantees existence of local slices for the action of the gauge group. This implies that the quotient $\mathcal{M}_{\sigma}(B,c)$ is a smooth, Banach manifold.
\end{proof}

\subsection{K\"ahler structure on Moduli space}

In this section we show that the moduli space can be realized as a Marsden-Weinstein or a symplectic quotient of a submanifold of the configuration space $\mathcal{C}^{1,p}$. The arguments in this section follow the work in \cite{becker} on moduli space of Seiberg-Witten equations on K\"ahler surfaces.

Recall that for a finite dimensional symplectic manifold $(M,\omega)$, with a Hamiltonian action of a Lie group $G$, there exists a moment map $\mu: M \longrightarrow \mathfrak{g}^{*}$, which is unique upto addition by constants in the centre of $\mathfrak{g}$. The equivariance of the moment map imples that the zero-locus of $\mu$ is $G$-invariant. If $0$ is a regular point of $\mu$, then $\mu^{-1}(0)/G$ is again a symplectic manifold. This follows from the well-known Marsden-Weinstein reduction theorem. If $M$ is K\"ahler and the group action preserves both the metric and the symplectic form, then the quotient is a K\"ahler manifold. If $M$ is a hyperK\"ahler manifold and the group action preserves the metric and all the three K\"ahler forms, then the quotient is again a hyperK\"ahler manifold. 

We shall now turn to the infinite dimensional analogue of the above quotient constructions, namely for an action of the gauge group $\mathcal{G}^{2,p}$ on the configuration space $\mathcal{C}^{1,p}$. The configuration space $\mathcal{C}^{1,p}$ carries a hypercomplex structure, i.e, a triple $\mathcal{I}_{1}, \mathcal{I}_{2}, \mathcal{I}_{3}$ of almost complex structures, that obeys quaternionic relations:
\begin{equation*}
\mathcal{I}_{1} = \begin{pmatrix}   
                * & 0 & 0 \\                    
               0 & -I & 0\\                      
               0 & 0 & -*                       
              \end{pmatrix},~~~                 
\mathcal{I}_{2} = \begin{pmatrix}
               0 & 0 & * \\
               0 & -J & 0 \\
               * & 0 & 0
              \end{pmatrix},~~~
\mathcal{I}_{3} = \begin{pmatrix}
               0 & 0 & -1 \\
               0 & K & 0 \\
               1 & 0 & 0
              \end{pmatrix}
\end{equation*}

We now prove that the Nijenhuis tensor for the each $\mathcal{I}_{i}, ~ i=1,2,3$ vanishes. The Nigenhuis tensor for the $i^{th}$ almost complex structure is given by:
\[\mathscr{N}^{i}_{q}(X,Y) = [X, Y] + \mathcal{I}^{u}_{i}[X, \mathcal{I}^{u}_{i} Y] + \mathcal{I}^{u}_{i}[\mathcal{I}^{u}_{i} X, Y] - [\mathcal{I}^{u}_{i}X, \mathcal{I}^{u}_{i}Y].\]
The expression splits into three summations: \[\mathscr{N}^{i}_{q}(X,Y) = N^{i}_{a}(\alpha_{1}, \alpha_{2}) \oplus N^{i}_{u}(\xi_{1},\xi_{2}) \oplus N^{i}_{\Phi}(\eta_{1}, \eta_{2}).\] 
The summation corresponding to the connection component vanishes, as the structure group is Abelian. The one corresponding to the Higgs field vanishes as $\ast$ is independent of the base point. It only remains to check that the Nijenhuis tensor vanishes for the spinor component
\[N^{i}_{u}(\xi_{1},\xi_{2}) = [\xi_{1}, \xi_{2}] + I^{u}_{i}[\xi_{1}, I^{u}_{i} \xi_{2}] + I^{u}_{i}[I^{u}_{i} \xi_{1}, \xi_{2}] - [I^{u}_{i}\xi_{1}, I^{u}_{i}\xi_{2}].\]

But since $I_{i}, ~ i=1,2,3$ are integrable, the Nijenhuis tensor corresponding to $I_{i}$, $N^{i}_{M}(\cdot, \cdot) \equiv 0$ and therefore $N^{i}_{u}(\xi_{1},\xi_{2}) = N^{i}_{M}(\cdot, \cdot)|_{u} = 0$. This holds for all $i = 1,2,3$. Therefore $\mathcal{I}_{i}$, $i=1,2,3$ are integrable and hence $\mathcal{I}_{1}, \mathcal{I}_{2}, \mathcal{I}_{3}$ define a hyperK\"ahler structure on the configuration space.

The $L^{2}$-metric on $\mathcal{C}^{1,p}$, defined by 
\begin{equation*}
g^{\scriptscriptstyle\mathcal{C}}(X,Y) = \frac{1}{2}\int_{\Sigma} *\alpha_{1}\wedge \alpha_{2} +  \frac{1}{2}\int_{\Sigma} g^{\scriptscriptstyle M}_{u}(\xi_{1}, \xi_{2})~\omega_{\Sigma} + \frac{1}{2}\int_{\Sigma} *\eta_{1}\wedge \eta_{2} 
\end{equation*}
where, $X=(\alpha_{1},\xi_{1},\eta_{1})$, $Y=(\alpha_{2},\xi_{2},\eta_{2}) \in T_{q}\mathcal{C}^{1,p}$. Here, the pull-back metric $ g^{\scriptscriptstyle M}_{u}: u^{\ast}TM \otimes u^{\ast}TM \longrightarrow \mathbb{R}$ is defined by
\[g^{\scriptscriptstyle M}_{u}((p,v), (p,w)) = g^{\scriptscriptstyle M}_{u(p)} (v,w), ~~ (p,v), (p,w) \in u^{\ast}TM \subset P \times TM.\]
The metric $g^{\scriptscriptstyle\mathcal{C}}$ is $\mathcal{G}^{2,p}$-invariant and is Hermitian with respect to all three complex structures; i.e $g^{\scriptscriptstyle\mathcal{C}}(\mathcal{I}_{i}X, \mathcal{I}_{i}Y) = g^{\scriptscriptstyle\mathcal{C}}(X,Y)$ for all $i = 1,2,3$. Therefore, the associated 2-forms $\Omega_{i}(\cdot, \cdot) = g^{\scriptscriptstyle\mathcal{C}}(\mathcal{I}_{i} (\cdot),\cdot )$ are non-degenerate, closed and hence K\"ahler \cite{werner}. They are also preserved by the action of the gauge group. Thus the natural $L^{2}$-metric on the configuration space $\mathcal{C}^{1,p}$ is a hyperK\"ahler metric. 

\begin{note}
Although $u^{\ast}g^{\scriptscriptstyle M}$ denotes the pull-back of a metric, the definition of the pull-back metric here does not involve differential of $u$, unlike the pull-back of differential forms.
\end{note}

\begin{remark}
The 2-forms $\Omega_{i}(\cdot, \cdot)$ define a \emph{weak hyperK\"ahler structure} on the configuration space. Namely, each closed form $\Omega_{i}(\cdot, \cdot): T\mathcal{C}^{1,p} \longrightarrow T^{\ast}\mathcal{C}^{1,p}$ is injective, but not surjective.  
\end{remark}

The action of the gauge group $\mathcal{G}^{2,p}$ on $\mathcal{C}^{1,p}$ preserves both the metric and the hyperK\"ahler structure. The first and the third equations of \eqref{seiberg-witen on riemann surface} can be interpreted as moment maps for the hyperK\"ahler action of the gauge group, corresponding to the symplectic 2-forms $\Omega_{1}$ and $\Omega_{c}:= \Omega_{2}+\mathrm{i}\Omega_{3}$ respectively. This can indeed be seen as follows:

The fundamental vector field for the infinitesimal action of the gauge group $\mathcal{G}^{2,p}$, at a point $q\in \mathcal{C}^{1,p}$ is given by $L^{\mathcal{C}}_{q} \gamma = (d\gamma, ~ L_{u}\gamma,~ 0)$. Define 
\begin{equation} \label{moment map on config space mu 1}
\tilde{\mu}_{\scriptscriptstyle\mathcal{I}_{1}}: \mathcal{C}^{1,p}_{\ast} \longrightarrow \text{Lie}(\mathcal{G}^{2,p})^{*}, ~~~~~~
\langle \tilde{\mu}_{\scriptscriptstyle\mathcal{I}_{1}}, \gamma \rangle (q) = \frac{1}{2}\int_{\Sigma} \left(\ast F_{\mathsf{a}} \cdot \gamma - \langle \gamma, \mu_{1}\circ u\rangle \right)\omega_{\scriptscriptstyle\Sigma}
\end{equation}
where $\langle \cdot, \cdot \rangle$ denotes the pairing. Therefore, for $X=(\alpha, \xi, \eta) \in T_{q}\mathcal{C}^{1,p}$,
\begin{align*}
\langle d\tilde{\mu}_{\scriptscriptstyle\mathcal{I}_{1}}(X), \gamma \rangle (q)
&= \frac{1}{2}\int_{\Sigma} \left(\ast d\alpha \cdot \gamma - \left\langle \gamma, d(\mu_{1}\circ u)(\xi)\right\rangle \right)\omega_{\scriptscriptstyle\Sigma}\\
&= \frac{1}{2}\int_{\Sigma} \left(- \ast d\gamma\wedge\alpha - g^{M}_{u}(I_{1}L^{M}_{u}\gamma, \xi) \right)\omega_{\scriptscriptstyle\Sigma} \\
&= \iota_{L^{\mathcal{C}}_{q} \gamma} \Omega_{1}(X)
\end{align*}

Let $\gamma_{1},~\gamma_{2} \in \text{Lie}(\mathcal{G}^{2,p})$ and define $\gamma = \gamma_{1} + \mathrm{i}\gamma_{2}$. Define 
\begin{equation*}
 \tilde{\mu}_{c}:\mathcal{C}^{1,p} \longrightarrow \mathbb{C}^{*}, ~~~~~~~ \langle \tilde{\mu}_{c}, \gamma \rangle (q) = \frac{1}{2}\int_{\Sigma} \left(\gamma \cdot \ast\overline{\partial}\Phi^{1,0} - \langle \gamma, (\mu_{c} \circ u)\rangle \right)\omega_{\scriptscriptstyle \Sigma} 
\end{equation*}
Therefore, again, for $X=(\alpha, \xi, \eta) \in T_{q}\mathcal{C}^{1,p}$,
\begin{align*}
\langle d\tilde{\mu}_{c}(X), \gamma \rangle (q)
&= \frac{1}{2} \int_{\Sigma} \left(\ast \overline{\partial}\eta^{1,0} \cdot \gamma - \left\langle \gamma, d(\mu_{c}\circ u)(\xi)\right\rangle \right)\omega_{\scriptscriptstyle\Sigma}\\
&= \frac{1}{2}\int_{\Sigma}\left( -\ast\overline{\partial}\gamma\wedge\eta^{1,0} - \left(g^{M}_{u}(I_{2}L^{M}_{u}\gamma, \xi) + g^{M}_{u}(I_{3}L^{M}_{u}\gamma, \xi)\right) \right)\omega_{\scriptscriptstyle\Sigma} \\
&= \iota_{L^{\mathcal{C}}_{q} \gamma} \big(\Omega_{2}(X) + \mathrm{i}\Omega_{3}(X) \big) \\
&= \iota_{L^{\mathcal{C}}_{q}\gamma} \Omega_{c}(X)
\end{align*}

Combining this into a single moment map $\tilde{\mu}: \mathcal{C}^{1,p} \longrightarrow \mathbb{R}^{3}\otimes\text{Lie}(\mathcal{G}^{2,p})^{*}$, we get the hyperK\"ahler moment map for the action of the gauge group. In order to show that the moduli space of solutions to \eqref{seiberg-witen on riemann surface} is a symplectic quotient, we need to impose the second equation. 
Denote by $\mathcal{C}' \subset \mathcal{C}^{1,p}$, the subspace of all the solutions to second and third equations in \eqref{seiberg-witen on riemann surface} 
\begin{equation}
\mathcal{C}' = \left\{\left.\left(\mathsf{a},u,\Phi \in \mathcal{C}^{1,p}_{\ast} \right)~~ \right\vert ~~ \left(\begin{array}{c}\partial_{\mathsf{a}}u - (X_{\Phi})^{1,0} \\ \ast\overline{\partial}\eta^{1,0} - \mu_{c}\circ u\end{array}\right) = 0 \right\}
\end{equation}

\begin{lemma}
$\mathcal{C}'$ is a complex submanifold of $\mathcal{C}^{1,p}$ with respect to the complex structure $\mathcal{I}_{1}$.
\end{lemma}
\begin{proof}
We only need to show that $T\mathcal{C}'\subset T\mathcal{C}^{1,p}$ is a complex sub-bundle. Abbreviate $D_{\mathsf{a}, u, \Phi}:= D$ for simplicity. The tangent space at a point $q:=(\mathsf{a}, u, \Phi)\in\mathcal{C}'$ is given by:
\begin{equation}
T_{q}\mathcal{C}' = \left\{\left.\left(\begin{array}{c}\xi \\ \alpha \\ \eta \end{array} \right)~~\right\vert~~ \left(\begin{array}{c} D\xi + (L_{u}\alpha)^{1,0} - (X_{\eta})^{1,0}\\ \ast\overline{\partial}\eta^{1,0} - d\mu_{c}(\xi)\end{array}\right) = 0 \right\}
\end{equation}
To see that $\mathcal{I}_{1}$ preserves $T_{q}\mathcal{C}'$, observe that
\[\mathcal{I}_{1} \left(\begin{array}{c}\alpha \\ \xi \\ \eta \end{array} \right) = \left(\begin{array}{c} *\alpha \\ -I_{1}\xi \\ -*\eta^{1,0} \end{array} \right). \] But since $D$ is a Cauchy-Riemann operator and $(L_{u}(*\alpha))^{1,0} = -I_{1}(L_{u}\alpha)^{1,0}$ and similarly $\left(X_{*\eta}\right)^{1,0} = -I_{1}\left(X_{\eta}\right)^{1,0}$, we get 
\[D(-I_{1})\xi + (L_{u}(*\alpha))^{1,0} - (X_{*\eta})^{1,0} = (-I_{1})\left(D\xi + (L_{u}\alpha)^{1,0} - (X_{\eta})^{1,0}\right) = 0 \] 
Since, $\eta^{1,0}\in\Omega^{1,0}(\Sigma,\mathbb{C})$, $*\eta^{1,0} = -\mathrm{i}\eta$. Also, $d\mu_{c}(-I_{1}\xi) = -\mathrm{i}d\mu_{c}(\xi)$. Clearly then, \[*\ast\overline{\partial} \ast\eta^{1,0} - d\mu_{c}(I_{1}\xi) = -\mathrm{i}\left(\ast\overline{\partial} \eta^{1,0} - d\mu_{c}(\xi)\right) = 0 .\] 

Therefore $\mathcal{I}_{1}$ preserves $T\mathcal{C}'\subset T\mathcal{C}^{1,p}$.
\end{proof}

The $L^{2}$-metric restricts to a K\"ahler metric on $\mathcal{C}'$. The induced action of the gauge group $\mathcal{G}^{2,p}$ preserves the induced metric and the symplectic two-form $\Omega_{1}$.
$\mathcal{C}'$ also admits a momentum map $\mu'_{\scriptscriptstyle\mathcal{I}_{1}}$ which is just a restriction of the momentum map $\tilde{\mu}_{\scriptscriptstyle\mathcal{I}_{1}}$ on the configuration space.
We denote this restriction by $\tilde{\mu}_{\scriptscriptstyle\mathcal{I}_{1}}$ itself. The solutions to the dimensionally reduced generalized Seiberg-Witten equations now correspond to the quotient of the zero locus
of the momentum map $\tilde{\mu}_{\scriptscriptstyle\mathcal{I}_{1}}$ by the gauge group $\mathcal{G}^{2,p}$. Using the standard arguments in K\"ahler geometry, we now show that the $L^{2}$-metric that is induced on the quotient $\mathcal{M} := \tilde{\mu}^{-1}_{\scriptscriptstyle\mathcal{I}_{1}}\{0\}/
\mathcal{G}^{2,p}$ is a K\"ahler metric.

\begin{theorem}
Let $\Sigma$ be a connected, compact, oriented, Riemannian surface and let $\mu'_{\scriptscriptstyle\mathcal{I}_{1}}: \mathcal{C}' \longrightarrow \mathbb{R}$ denote the restriction of the moment map $\tilde{\mu}_{\scriptscriptstyle \mathcal{I}_{1}}$ as in \eqref{moment map on config space mu 1} for the action of the gauge group $\mathcal{G}^{2,p}$ on $\mathcal{C}'$. Then the metric induced on the quotient $(\mu'_{\scriptscriptstyle\mathcal{I}_{1}})^{-1}(0)/\mathcal{G}^{2,p}:= \mathcal{M}$ is a K\"ahler metric.
\end{theorem}

\begin{proof}
The submanifold $(\mu'_{\scriptscriptstyle\mathcal{I}_{1}})^{-1}(0) \subset \mathcal{C}' \subset \mathcal{C}^{1,p}$ carries a natural $L^{2}$-metric, induced from the $L^{2}$-metric on $\mathcal{C}^{1,p}$. The action of the gauge group is by isometries, which implies that there exists a unique Riemannian metric on the quotient such that $\pi:(\mu'_{\scriptscriptstyle\mathcal{I}_{1}})^{-1}(0) \longrightarrow \mathcal{M}$ is a Riemannian submersion. Let $X, Y \in \Gamma(\mathcal{M}, T\mathcal{M})$ and $\widetilde{X}, \widetilde{Y}$ denote the horizontal lifts to $(\mu'_{\scriptscriptstyle\mathcal{I}_{1}})^{-1}(0)$. Then the covariant derivative of $\widetilde{Y}$ with respect to $\widetilde{X}$ is given by \[ \nabla^{\scriptscriptstyle \mathcal{M}}_{X}Y = \pi_{\ast} \left( \nabla^{ (\mu'_{\scriptscriptstyle \mathcal{I}_{1}})^{-1}(0)}_{\widetilde{X}} \widetilde{Y} \right), \] where, $\nabla^{ (\mu'_{\scriptscriptstyle \mathcal{I}_{1}})^{-1}(0)}$ denotes the restriction of the Levi-Civita connection on the configuration space, to $(\mu'_{\scriptscriptstyle \mathcal{I}_{1}})^{-1}(0)$. We identify the pull-back of $T\mathcal{M}$ with the horizontal sub-bundle of $T\left((\mu'_{\scriptscriptstyle \mathcal{I}_{1}})^{-1}(0)\right)$
\[\pi^{\ast}(TM) \cong \mathcal{H}\left( (\mu'_{\scriptscriptstyle \mathcal{I}_{1}})^{-1}(0) \right) = \mathcal{H}(\mathcal{C}')|_{(\mu'_{\scriptscriptstyle \mathcal{I}_{1}})^{-1}(0)}~ \bigcap~ T\left((\mu'_{\scriptscriptstyle \mathcal{I}_{1}})^{-1}(0) \right).\]
But the restriction to $(\mu'_{\scriptscriptstyle \mathcal{I}_{1}})^{-1}(0)$ of $T\mathcal{C}'$ splits $L^{2}$-orthogonally as
\begin{align*}
T\mathcal{C}'|_{(\mu'_{\scriptscriptstyle \mathcal{I}_{1}})^{-1}(0)} 
&= \mathcal{H}\left( (\mu'_{\scriptscriptstyle \mathcal{I}_{1}})^{-1}(0) \right) \oplus \left(\mathcal{H}\left( (\mu'_{\scriptscriptstyle \mathcal{I}_{1}})^{-1}(0)\right) \right)^{\perp}\\
&\cong \pi^{\ast}T\mathcal{M} \oplus \left(\mathcal{H}\left( (\mu'_{\scriptscriptstyle \mathcal{I}_{1}})^{-1}(0)\right) \right)^{\perp}\\
&= \pi^{\ast}T\mathcal{M} \oplus \text{im}(T_{0}) \oplus \left(\text{ker}(\mu'_{\scriptscriptstyle \mathcal{I}_{1}})^{\perp} \right)
\end{align*}
where $T_{0}$ denotes the linearization of the orbit map. In order to define the complex structure on $T\mathcal{M}$, it now suffices to show that $\left(\pi^{\ast}T\mathcal{M}\right)^{\perp}$ is preserved by the induced complex structure $\mathcal{I}^{'}_{1}$:
\begin{itemize}
\item Note that the image of an element $\xi \in \Omega^{0}(\Sigma, \mathbb{R})$ under the linearization of the orbit map through $q=(\mathsf{a}, u, \Phi)$ is the same as the fundamental vector field $L_{q} \xi$. We have \[\langle \mathcal{I}_{1}L_{q} \xi,~ Z\rangle = \Omega_{1}(L_{q} \xi, Z) = \langle d\mu'_{\scriptscriptstyle \mathcal{I}_{1}}(Z), \xi \rangle.\] This implies that $\mathcal{I}_{1}L_{q} \xi \perp \text{ker}(d\mu'_{\scriptscriptstyle \mathcal{I}_{1}})$.
\item Let $Z\in \text{ker}(d\mu'_{\scriptscriptstyle \mathcal{I}_{1}})$ and $\mathcal{I}_{1}Z \perp \text{im}(T_{0})$. Then for $\xi \in \Omega^{0}(\Sigma, \mathbb{R})$,
\[0 = \langle \mathcal{I}_{1}Z,~ L_{q} \xi\rangle  = -\Omega_{1}(L_{q}\xi, Z) = - \langle d\mu'_{\scriptscriptstyle \mathcal{I}_{1}}(Z), L_{q}\xi \rangle .\] Therefore $Z \in (\text{ker}(d\mu'_{\scriptscriptstyle \mathcal{I}_{1}})) \bigcap \text{ker}(d\mu'_{\scriptscriptstyle \mathcal{I}_{1}})^{\perp} = \{0\}$. 
\end{itemize}
Hence the complex structure preserves the splitting and defines a complex structure $\mathcal{I}^{\scriptscriptstyle \mathcal{M}}$ on $\mathcal{M}$.

It only remains to show that the complex structure is parallel. But this follows directly from the fact that 
\begin{itemize}
\item The complex structure $\mathcal{I}'_{1}$ on $\mathcal{C}'$ is parallel.
\item  The Levi-Civita connection on $\mathcal{M}$ is given by the projection of the Levi-Civita connection on $(\mu'_{\scriptscriptstyle \mathcal{I}_{1}})^{-1}(0)$.
\item Projection commutes with the complex structures $\mathcal{I}'_{1}$ and $\mathcal{I}^{\scriptscriptstyle \mathcal{M}}$.
\end{itemize}
Thus we have proved that the induced complex structure on $\mathcal{M}$ is parallel with repsect to the Levi-Civita connection for the induced $L^{2}$-metric on $\mathcal{M}$. Therefore the $L^{2}$-metric on $\mathcal{M}$ is K\"ahler.
\end{proof}
%%%%%%%%%%%%%%%%%%%% Quantization %%%%%%%%%%%%%%%%%%%%%%%%

\section{Pre-quantum Line-bundle on moduli space}
\label{quantization}

Let $\rho$ denote the local K\"{a}hler potential for the first symplectic form $ \omega_1$ of $M$, our target hyperK\"{a}hler manifold. Local potentials exist for any K\"{a}hler form.   
Let $p \in P$. Then $u(p) \in M$. Let $V_{p}$ be a neighbourhood of $u(p)$ such that $\rho(u(p))$ is local a K\"{a}hler potential for $\omega_1$ in $V_{p}$. $u^{-1}(V_{p})$ is a covering  of $P$ which has a finite covering, namely $u^{-1}(V_{p_i})$, $i =1,2..., N$. Let  $\phi_i$ , $i = 1,...,N$ be a partition of unity subordinate to this finite covering of $P$. Let $\rho_i$, $i = 1,...,N$ be the local K\"{a}hler potential for $V_{p_i}$ for the form $\omega_1$. Let $z = \pi(p)$ be a point on the Riemann surface $\Sigma$. Define
$$\rho_0(u) = \int_{\Sigma} \Sigma_{i=1}^N \rho_i (u(\pi^{-1}(z))) \phi_i(\pi^{-1}(z)) \omega_{\Sigma}. $$

Let us define on the configuration space parametrized by the triple $(\mathsf{a}, u , \Phi)$ a Quillen determinant bundle ${\mathcal Q} = det(\partial_{\mathsf{a}})$, ~\cite{Q}, i.e. a line bundle whose the fiber over $(\mathsf{a}, u, \Phi)$ is given by \[\wedge^{top} (Ker~ \partial_{\mathsf{a}})^* \otimes \wedge^{top} (Coker ~\partial_{\mathsf{a}}).\] 

Following the idea in ~\cite{BR}, we modify the Quillen metric $exp(- \zeta_A^{\prime} (0))$ by multiplying it with $ exp( \frac{i}{4 \pi}(\rho_0(u))$ and $exp \left(\frac{i}{8 \pi} \displaystyle \int_{\Sigma} \Phi \wedge *\Phi \right)$, where $\Phi = \phi dz - \bar{\phi} d \bar{z}$. From the metric, one can calculate  the curvature  by the formula $\delta_w \delta_{\bar{w}} log || \sigma || $ where $w$ is the holomorphic coordinate on the configuation space and $\sigma $ is the canonical section of the determinant bundle \cite{Q}. The holomorphic coordinates on the configuation space  is given by $(\mathsf{a}^{0,1}, u, \Phi)$ w.r.t. the complex structure $\mathcal{I}_1$. The first term in the metric, namely  $exp(- \zeta_{\mathsf{a}}^{\prime} (0))$ contributes to the curvature by a term 
$\displaystyle\frac{i}{2 \pi} \left(\frac{-1}{2} \int_{\Sigma} \pi_{!}  \left(\alpha_1 \wedge \alpha_2 \right) \right)$, ~\cite{Q, D1}, which is the first term in $\Omega_{1}(X, Y)$. 
The second term in the metric contributes to the second term in $\Omega_1$. This can be seen as follows.
Let $u(p) = (u_1(p),..., u_n(p))$, in some local coordinate centered  at $ p \in M$ where $n = dim M$. Once again $ z = \pi(p)$. 
\begin{eqnarray*}
\gamma &=& \delta_u \delta_{\bar{u}} \rho_0(u)\\
&=& \int_{\Sigma} \Sigma_{i=1}^N 
\delta_u \delta_{\bar{u}} \rho_i(u(\pi^{-1}(z))) \phi_i(\pi^{-1}(z))) \omega_\Sigma\\
&=& \int_{\Sigma} \Sigma_{i=1}^N \Sigma_{j = 1}^{dim M}
\delta_{u_j}  \delta_{\bar{u_j}} \rho_i(u(\pi^{-1}(z))) \phi_i(\pi^{-1}(z))) \omega_\Sigma\\
&=& \frac{1}{2}\int_{\Sigma} \Sigma_{i=1}^N g^{\scriptscriptstyle M}_{u} \left(I_{1} \cdot, \cdot \right) \phi_i(\pi^{-1}(z))  \omega_{\scriptscriptstyle\Sigma}\\
&=& \frac{1}{2}\int_{\Sigma}  g^{\scriptscriptstyle M}_{u} \left(I_{1} \cdot, \cdot \right)   \omega_{\scriptscriptstyle\Sigma}
\end{eqnarray*} 
where we have used the fact that since $\rho_i$ is a K\"{a}hler potential for $\omega_1$ on $V_{p_i}$, $\Sigma_{j = 1}^{dim M}
\delta_{u_j}  \delta_{\bar{u_j}} \rho_i(u(\pi^{-1}(z))) = g^{\scriptscriptstyle M}_{u} \left(I_{1} \cdot, \cdot \right)$  and $\Sigma_{i=1}^N \phi_i(\pi^{-1}(z)) =1$.  Then $\gamma ( \xi_1, \xi_2) =  \displaystyle \int_{\Sigma} g^{\scriptscriptstyle M}_{u} ( I_{1} \xi_1, \xi_2) \omega_{\scriptscriptstyle \Sigma}$. The third term in the metric contributes to the third term in $\Omega_1$.
This can be seen as follows: Recall $\Phi = \phi dz - \bar{\phi} d \bar{z}$,~ $*\Phi =  \bar{\phi} d \bar{z} + \phi d z$ 
so that $\displaystyle exp\left(\frac{i}{8 \pi}  \int_{\Sigma} \Phi \wedge *\Phi \right) = \displaystyle exp \left(\frac{i}{4 \pi}  \int_{\Sigma} (\phi \bar{\phi}) dz \wedge d \bar{z}\right)$.
Let
\begin{eqnarray*}
\tau &=&\delta_{\phi} \delta_{ \bar{\phi}} \left(log  \left(exp\left(\frac{i}{4 \pi} \displaystyle \int_{\Sigma} (\phi \bar{\phi}) dz \wedge d \bar{z}\right)\right)\right) \\
&=& \frac{i}{4 \pi}  \int_{\Sigma}\delta_{\phi} \delta_{\bar{\phi}}( \phi  \bar{\phi}) dz \wedge d \bar{z} \\
&=& \frac{i}{4 \pi} \int_{\Sigma} \left(\delta \phi \otimes \delta \bar{\phi} - \delta \bar{\phi} \otimes \delta \phi \right) d z \wedge d \bar{z}
\end{eqnarray*}
Then, $\displaystyle \tau (\eta_1, \eta_2) = \frac{i}{4 \pi} \int_{\Sigma} \eta_1 \wedge \eta_2$. The three terms combined gives us the following proposition:

\begin{proposition}\label{quillen construction}
On the configuration space, the Quillen bundle ${\mathcal Q}$ equipped with the modified metric mentioned 
above has curvature $\frac{i}{2 \pi} \Omega_1$.
\end{proposition}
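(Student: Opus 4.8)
The plan is to compute the curvature of $\mathcal{P}$ with the modified metric directly from the formula $F=\delta_w\delta_{\bar w}\log\|\sigma\|$, where $\sigma$ is the canonical section and $w=(\mathsf{a}^{0,1},u,\Phi)$ are the holomorphic coordinates on $\mathcal{C}$ with respect to $\mathcal{I}_1$. Since the modified metric is the \emph{product} of the Quillen metric $\exp(-\zeta'_{\mathsf{a}}(0))$ with the two correction factors, its logarithm is a \emph{sum} of three terms; and because $\delta_w\delta_{\bar w}$ is a second-order linear operator, the curvature splits accordingly into three contributions, which I would match one at a time against the three summands of $\Omega_1$.

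First I would dispatch the genuinely analytic input. The contribution of $\exp(-\zeta'_{\mathsf{a}}(0))$ is governed by Quillen's theorem: the curvature of the $\zeta$-regularized determinant metric on $\det(\partial_{\mathsf{a}})$ over the space of connections equals $\frac{i}{2\pi}\big(-\tfrac12\int_{\Sigma}\pi_!(\alpha_1\wedge\alpha_2)\big)$, the fibre integral of the universal curvature. I would quote this from \cite{Q,D1}; it is the deepest ingredient and I would not reprove it. Its output is exactly the first term of $\Omega_1(X,Y)$.

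Next I would treat the second factor: that $\rho_0$ is a hyperK\"ahler potential means, by definition, that with respect to $I_1$ it is a global K\"ahler potential, so that applying $\delta_u\delta_{\bar u}$ under the integral sign reproduces the computation labelled $\gamma$ above and yields the second term of $\Omega_1$ as $\tfrac12\int_{\Sigma} g^M_u(I_1\zeta_1,\zeta_2)\,\omega_\Sigma$. The third factor is elementary: rewriting $\Phi\wedge *\Phi = 2\phi\bar\phi\,dz\wedge d\bar z$ and letting $\delta_\phi\delta_{\bar\phi}$ act on $\phi\bar\phi$ produces the antisymmetric tensor $\delta\phi\otimes\delta\bar\phi-\delta\bar\phi\otimes\delta\phi$, giving $\tfrac{i}{4\pi}\int_\Sigma\eta_1\wedge\eta_2$ — the calculation labelled $\tau$ — which is the third term of $\Omega_1$.

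Finally I would sum the three contributions and verify that the constants $\tfrac{1}{4\pi}$ and $\tfrac{1}{8\pi}$ in the two correction factors were chosen precisely so that, together with Quillen's $\tfrac{i}{2\pi}$, every term acquires the common prefactor $\tfrac{i}{2\pi}$, giving total curvature $\tfrac{i}{2\pi}\Omega_1$. I expect the main obstacle to be the bookkeeping of constants and conventions rather than any conceptual difficulty: the exact $2\pi$ normalization in Quillen's curvature formula, the factor of $\tfrac12$ implicit in the phrase ``hyperK\"ahler potential,'' and the Hodge-star identities $*\Phi=\bar\phi\,d\bar z+\phi\,dz$ and $*\eta=-\mathrm{i}\eta$ for $(1,0)$-forms must all be tracked so that no spurious factor of $2$ or sign survives. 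With the analytic heart imported from \cite{Q,D1}, the residual risk is entirely one of matching normalizations.
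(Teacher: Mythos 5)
Your proposal is correct and follows essentially the same route as the paper: the paper likewise splits the logarithm of the product metric into three summands, imports Quillen's curvature formula from \cite{Q, D1} for the $\zeta$-determinant factor, and matches the $\delta_u\delta_{\bar u}$ computation on the hyperK\"ahler potential $\rho_0$ (the term $\gamma$) and the elementary $\delta_\phi\delta_{\bar\phi}$ computation on $\Phi\wedge *\Phi$ (the term $\tau$) to the second and third terms of $\Omega_1$. The only work left, as you note, is tracking the normalizing constants $\tfrac{i}{4\pi}$ and $\tfrac{i}{8\pi}$, which is exactly how the paper concludes.
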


As in \cite{D1, D2}, it can be shown that this line bundle descends to the moduli space as long as the descendent of $\Omega_1$ is integral. It  is holomorphic and is a prequantum bundle since its curvature is proportional to the symplectic form with the proportionality constant $\frac{i}{2 \pi}$. One can take holomorphic square integrable sections of this bundle as the Hilbert space of quantization.

%%%%%%%%%%%%%%%%%%%% Summary and Discussion %%%%%%%%%%%%%%
\section{Summary and discussion}

The dimensional reduction technique mentioned gives us a generalization of the Symplectic vortex equations ($\Phi = 0$). It is well-known that the invariants for Hamiltonian group actions on a symplectic manifold are related to Gromov-Witten invariants for its symplectic reduction \cite{ziltner}. Assuming that we atleast have the moduli space of finite volume, for $\Phi = 0$, we must get equivalence between invariants for Hamiltonian group action on $\mu_c \circ u = 0$ and Gromov-Witten invariants for hyperK\"ahler reduction of $M$. The latter are known to be trivial! This gives rise to an interesting question as to whether the presence of a non-zero Higgs-field helps us define non-trivial, Gromov-Witten like invariants for hyperK\"ahler manifolds. 

%%%%%%%%%%%%%%%%%%%% Bibliography %%%%%%%%%%%%%%%%%%%%%%%%

\bibliographystyle{plain}

\bibliography{Generalized_Seiberg_Witten_on_Riemann_surfaces_Varun_Thakre}

\end{document}